\documentclass[11pt]{article}
\usepackage{amssymb}
\usepackage{amsthm}

\parskip 3mm
\parindent 0mm

\usepackage{url}
\newtheorem{theorem}{Theorem}
\newtheorem{lemma}[theorem]{Lemma}
\newtheorem{corollary}[theorem]{Corollary}

\newtheorem{claim}{Claim}

\newcommand{\F}{\mathbb F}

\newcommand{\B}{\mathcal{B}}
\begin{document}

\title{A Removal Lemma for Systems of Linear Equations over Finite Fields}
\author{Daniel Kr{\'a}l'\thanks{%
        Institute for Theoretical Computer Science (ITI),
        Faculty of Mathematics and Physics, Charles University,
        Malostransk\'e n\'am\v{e}st\'{\i}~25, 118~00 Prague, Czech
        Republic. E-mail: {\tt kral@kam.mff.cuni.cz}. Institute for
        Theoretical computer science is supported as project 1M0545
        by Czech Ministry of Education.}
        \and
        Oriol Serra \thanks{%
        Departament de Matem\`atica Aplicada IV,
        Universitat Polit\`ecnica de Catalunya. E-mail: {\tt oserra@ma4.upc.edu}.   Supported by the Catalan Research Council
        under project 2005SGR0258.}
        \and
    Llu\'{i}s Vena \thanks{%
        Departament de Matem\`atica Aplicada IV,
        Universitat Polit\`ecnica de Catalunya. E-mail: {\tt lvena@ma4.upc.edu}. Supported by the
        Spanish Research Council under project MTM2005-08990-C01-C02.}
        }
\date{}
\maketitle

\begin{abstract}
We prove a removal lemma for systems of linear equations over finite fields:
let $X_1,\ldots,X_m$ be subsets of the finite field $\F_q$ and let
$A$ be a $(k\times m)$ matrix with coefficients in $\F_q$ and rank
$k$; if the linear system $Ax=b$ has  $o(q^{m-k})$ solutions with
$x_i\in X_i$, then we can destroy all these solutions
by deleting $o(q)$ elements from each $X_i$.
This extends a result of
Green [Geometric and Functional Analysis 15(2) (2005), 340--376]
for a single linear equation in abelian groups
to systems of linear equations. In particular,
we also obtain an analogous result for systems of equations over integers,
a result conjectured by Green.
Our proof uses the colored version of the hypergraph Removal Lemma.
\end{abstract}

\section{Introduction}

In 2005, Green~\cite[Theorem~1.5]{green05} proved the so-called
Removal Lemma for abelian groups. It roughly says that if a linear
equation over an abelian group has not many solutions one can
delete all the solutions by removing few elements. This Removal
Lemma for groups has its roots in the well--known Triangle Removal
Lemma  of Ruzsa and Szemer\'edi~\cite{ruzszem76}  (see
also~\cite{simonovits} for generalizations and many applications
of this important result in combinatorics) which roughly says that
if a certain graph has not many triangles, then they are supported
over not many edges.

In~\cite{ksv08}, the authors gave a purely combinatorial proof, by
using the Removal Lemma for graphs, of the algebraic version of
the Removal Lemma for linear equations which allows for an
extension of the result to non-abelian groups. In the same paper,
the authors considered some extensions of the result to systems of
equations in abelian groups which could be proved along the same
lines. However to extend the result to general linear systems, the
graph representation used in the mentioned paper presented serious
limitations. Instead, the extensions to hypergraphs of the removal
lemma, which have been recently proved by Nagle, R\"odl,
Schacht~\cite{rodl06}, Gowers~\cite{gowers} or Tao~\cite{tao},
seem to be the natural tool to achieve this goal.

Our main result is the following:

\begin{theorem}[Removal Lemma for systems of equations] \label{th:rm-syseq}
Let $F=\mathbb{F}_q$ be the finite field of order $q$.
Let $X_1,\ldots,X_m$ be subsets of $F$, $A$ a $(k\times m)$ matrix
with coefficients in $F$ whose rank is $k$ and $b$ a $k$-dimensional
vector over $F$.

If there are $o(q^{m-k})$ solutions of the system $Ax=b$
with $x_i\in X_i$, then there exist sets $X_1',\ldots,X_m'$
with $|X_i\setminus X_i'|=o(q)$ such that
there is no solution to the system $Ax=b$ with $x_i\in X_i'$.
\end{theorem}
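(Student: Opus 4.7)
My plan is to invoke the colored hypergraph Removal Lemma (Nagle--R\"odl--Schacht, Gowers, Tao), generalising the approach of Green and of \cite{ksv08} which handled the single-equation case via the triangle removal lemma. The overall architecture: build an auxiliary $m$-partite $r$-uniform hypergraph $H$ for a suitable $r$, so that (a) each solution of $Ax=b$ with all coordinates in the $X_i$ gives rise to a uniform number of partite copies of $K_m^{(r)}$ in $H$, (b) the assumption of $o(q^{m-k})$ admissible solutions translates into $o(n^m)$ such copies for $n=|V(H)|$, and (c) each element of $X_i$ is encoded as a block of hyperedges of $H$ large enough that the $o(n^r)$ hyperedge deletions supplied by the Removal Lemma can be absorbed into $o(q)$ element deletions per $X_i$.

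Before the construction I would reduce to the ``general position'' case where any $k$ columns of $A$ are linearly independent; equivalently, any $r=m-k$ coordinates of a solution determine the remaining $k$ uniquely. Degenerate columns can be dealt with by deleting redundant variables or by pinning a variable to a fixed value and invoking induction on a smaller system. Under general position, fix a linear parametrisation $\phi\colon\mathbb F_q^r\to\mathrm{Sol}(Ax=b)$ of the solution space. The parts $V_1,\ldots,V_m$ of $H$ encode a combination of ``partial coordinate data'' together with an additional gauge parameter; an $r$-edge on $S\subset\{1,\ldots,m\}$ of size $r$ is placed whenever the partial data on $S$ is consistent with some common $t\in\mathbb F_q^r$ for which $\phi_i(t)\in X_i$ for all $i\in S$. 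A copy of $K_m^{(r)}$ in $H$ forces the partial pieces on the different $r$-subsets to agree on a single $t$, producing a solution $\phi(t)\in X_1\times\cdots\times X_m$, and conversely each such solution gives rise to a uniform $q^c$-sized orbit of copies through the gauge action.

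With this encoding in place, the number of copies of $K_m^{(r)}$ in $H$ is $q^c$ times the number of admissible solutions, hence $o(n^m)$. The colored Removal Lemma then produces a set of $o(n^r)$ hyperedges whose removal destroys every copy, and the blockwise encoding of elements as hyperedges lets one cover these deletions by removing $o(q)$ elements from each $X_i$. The resulting $X_i'$ are solution-free, since a surviving solution would yield a surviving copy of $K_m^{(r)}$.

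The main obstacle is the combinatorial/linear-algebraic design of $H$: the sizes of the $V_i$, the gauge, and the rule for inserting hyperedges all have to be chosen together so that the number of copies per solution, the total number of copies, and the number of hyperedges per element of $X_i$ line up to give the desired $o(q)$ bound. In the single-equation case a graph construction in the style of the Schur-triple triangle suffices, but for systems the consistency between different $r$-subsets $S$ (the requirement that their partial data come from a common $t$) must be built into the edge definition and requires some additional linear-algebraic work. The reduction to general position and the final greedy covering step that converts edge deletions into element deletions are further, mostly routine, technical ingredients.
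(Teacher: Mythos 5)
Your high-level strategy is sound and matches the paper's: build a large $m$-partite edge-colored hypergraph $K$ and a small pattern $H$, arrange that each admissible solution of $Ax=b$ contributes a fixed number of edge-disjoint colored copies of $H$, invoke the colored hypergraph Removal Lemma, and convert the $o(\cdot)$ hyperedge deletions into $o(q)$ element deletions per $X_i$ by pigeonhole on colors and labels. That architecture is correct.

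The genuine gap is the reduction to ``general position.'' You propose to assume that any $k$ columns of $A$ are linearly independent, arguing that degenerate columns can be handled by ``deleting redundant variables'' or ``pinning a variable'' and inducting. This does not work. Column degeneracy in $A$ (two or more columns among $k$ being dependent) does not express any variable as a function of the others and does not shrink the system: consider, for instance, $k=2$, $m=5$, $A=(I_2\,|\,B)$ with $B=\left(\begin{smallmatrix}1&1&0\\1&0&1\end{smallmatrix}\right)$, where columns $1$ and $4$ of $A$ coincide but no variable is redundant and no row-reduction removes the degeneracy. The reductions that \emph{are} legitimate (assume $b=0$, $A=(I_k|B)$, every two rows of $B$ independent, every row of $B$ with at least two nonzero entries) do not imply your general-position hypothesis. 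The paper is explicit that the general-position case is ``substantially easier'' and was treated separately in~\cite{ksv08_2}; the point of the present theorem is precisely to handle the degenerate case. Moreover, your construction is tied to general position in an essential way: the claim that any $r=m-k$ coordinates determine the rest is equivalent to every $k$ columns of $A$ being independent, and without it the ``common $t$''/gauge bookkeeping on the $r$-subsets $S$ does not produce a bijection between solutions and copies of $K_m^{(r)}$.

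Even setting aside the reduction, your proposal diverges from the paper's construction in two further respects worth noting. First, you use $(m-k)$-uniform hypergraphs and look for complete copies $K_m^{(r)}$, while the paper uses $(k+1)$-uniform hypergraphs and a specific pattern $H$ with exactly $m$ edges $S_1,\dots,S_m$ (not $\binom{m}{k+1}$ edges). Second, the paper's way around the degeneracy issue is an explicit auxiliary matrix $C$ (Lemma~\ref{lem:mat-c-rk}) with $AC=0$, $\mathrm{rank}(C)=m-k$, rows supported in prescribed $(k+1)$-sets $S_i$, and a partial-rank condition guaranteeing the $q^k$ copies per solution are edge-disjoint. This linear-algebraic lemma is the paper's main technical contribution and is exactly what your ``main obstacle'' paragraph gestures at but leaves unresolved. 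To repair your proposal you would need either a valid reduction to general position (which appears not to exist) or a replacement for Lemma~\ref{lem:mat-c-rk}-type bookkeeping adapted to your $(m-k)$-uniform setting.
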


So, if a linear system  has not many solutions, then these solutions
are supported by not many elements. Since this is the first time
we use the little $o$ notation, let us be more precise here:
Theorem~\ref{th:rm-syseq} asserts that
for every $\varepsilon>0$, $k$ and $m$,
there exists $\delta>0$ such that if the number of solutions
is at most $\delta q^{m-k}$, all the solutions can be destroyed
by removing at most $\varepsilon q$ elements from each of the sets $X_i$.
The value of $\delta$ depends only on $\varepsilon$, $k$ and $m$,
in particular, it is independent of $q$.

Theorem~\ref{th:rm-syseq} implies an analogous result in the
framework where solutions from subsets $X_i\subset [1,N]$ of a
linear system in the integers are sought, a result conjectured by
Green~\cite[Conjecture 9.4]{green05}.

Independently of us, Conjecture 9.4 from~\cite{green05}
was proved by Shapira~\cite{bib-shapira} whose method
also yields a different proof of Theorem~\ref{th:rm-syseq}.
Shapira's proof also uses the colored version of the hypergraph
Removal Lemma (Theorem~\ref{th:rmhc}) as our proof does.
However, his proof involves $O(m^2)$-uniform hypergraphs and
our proof involves $(k+1)$-uniform hypegraphs. At the high
level, the two proofs follow the idea of reducing
the problem to a hypergraph problem, but the particular
ideas used to reduce the problem differ a lot.

Let us also mention that the conclusion of
Theorem~\ref{th:rm-syseq} can be proven substantially easier if we
assume that every $k$ columns of the matrix are linearly
inpendent; we have reported on this result in~\cite{ksv08_2}.

As an example of application of Theorem~\ref{th:rm-syseq},
we have the following.

\begin{corollary} \label{coro:ap}
If a subset $X\subset \mathbb{F}_{q}$, $q=p^n$, $p\geq k\geq 3$, has
$o(q^2)$ arithmetic progressions of length $k$, then the set $A$ has
$o(q)$ elements.
\end{corollary}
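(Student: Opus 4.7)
My plan is to phrase $k$-term arithmetic progressions as the solution set of a linear system, apply Theorem~\ref{th:rm-syseq}, and then exploit the trivial constant progressions to bypass any appeal to Szemer\'edi-type results.

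First I would encode the progressions: an ordered tuple $(x_1,\dots,x_k)\in\F_q^k$ is a $k$-AP precisely when consecutive differences coincide, that is, when $x_i-2x_{i+1}+x_{i+2}=0$ for every $1\le i\le k-2$. These $k-2$ equations define a linear system $Ax=0$ whose $(k-2)\times k$ coefficient matrix has linearly independent rows, since the rightmost nonzero entry of the $i$-th row sits in column $i+2$; hence $\mathrm{rank}(A)=k-2$. The hypothesis $p\ge 3$ guarantees $-2\not\equiv 0\pmod p$, so this encoding is valid over $\F_q$.

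Next I would apply Theorem~\ref{th:rm-syseq} with $m=k$, matrix $A$, right-hand side $b=0$, and $X_1=\cdots=X_k=X$. The number of $k$-APs in $X$ is $o(q^2)$ by hypothesis, and including (or excluding) the $|X|\le q$ constant-tuple solutions changes this count by at most $O(q)=o(q^2)$; since $m-\mathrm{rank}(A)=2$, the hypothesis of the removal lemma is met. The theorem then produces subsets $X_i'\subseteq X$ with $|X\setminus X_i'|=o(q)$ such that the system admits no solution with $x_i\in X_i'$ for every $i$.

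The decisive step will be the following observation: for every $x\in X':=\bigcap_{i=1}^{k}X_i'$, the constant tuple $(x,\dots,x)$ is a solution of the system with all entries in $X_1'\times\cdots\times X_k'$, which forces $X'=\emptyset$. Consequently
\[
|X|\;=\;|X\setminus X'|\;\le\;\sum_{i=1}^{k}|X\setminus X_i'|\;=\;o(q),
\]
as claimed. I do not foresee any real obstacle beyond noticing that the constant tuples are genuine solutions of the encoding system; this small trick converts the structural conclusion of Theorem~\ref{th:rm-syseq} directly into the desired cardinality bound, without invoking any density result for progression-free sets.
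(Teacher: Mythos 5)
Your proof is correct, and it is worth noting that the paper itself does not include a proof of Corollary~\ref{coro:ap}; it states the corollary and moves on, so there is no ``paper proof'' to compare against. Your encoding of $k$-term progressions by the system $x_i-2x_{i+1}+x_{i+2}=0$, $1\le i\le k-2$, is the standard one, and the matrix indeed has rank $k-2$ (echelon structure: leading entry of row $i$ sits in column $i$), so Theorem~\ref{th:rm-syseq} applies with the theorem's $k$ being $k-2$ and $m=k$, giving the threshold $o(q^{m-\mathrm{rank}})=o(q^2)$, which matches the hypothesis after absorbing the at most $q$ constant tuples. The crucial and slightly non-obvious step is exactly the one you isolate: every $x\in X'=\bigcap_i X_i'$ yields the solution $(x,\dots,x)$ with all coordinates inside $\prod_i X_i'$, so the removal-lemma conclusion forces $X'=\emptyset$ and hence $|X|\le\sum_i|X\setminus X_i'|=o(q)$. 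This ``trivial-progressions'' trick is the right way to convert the removal statement into a cardinality bound without invoking Szemer\'edi-type density results. One small remark: your argument only uses $p\ge 3$ (so that $-2\ne 0$), not the stated stronger hypothesis $p\ge k$; the latter is presumably there so that a nontrivial $k$-AP in $\F_q$ has $k$ distinct terms and the statement reads naturally, but it plays no logical role in the proof you give. Also note the statement's ``the set $A$'' is a typo for ``the set $X$'', which you correctly read around.
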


The proof of Theorem~\ref{th:rm-syseq} follows 
the
main idea of the proof presented in~\cite{ksv08}. As we have already
mentioned, we use the edge-colored version of the hypergraph Removal
Lemma which follows from a more general result of Austin and
Tao~\cite[Theorem 2.1]{at}.

\begin{theorem}[Austin and Tao~\cite{at}]\label{th:rmhc}
Let $H$ be an edge-colored $(k+1)$-uniform hypergraph with $m$
vertices. Let $K=\left(V,E\right)$ be an edge colored $(k+1)$-uniform
$t$-partite hypergraph with $M$ vertices.
If the number of copies of $H$ in $K$ (preserving the colors of the edges)
is $o(M^m)$, then there is a set $E'\subset E$ of size $|E'|=o(M^{k+1})$
such that $K'=(V,E\setminus E')$, as an edge-colored hypergraph, is $H$-free.
\end{theorem}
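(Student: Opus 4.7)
The plan is to deduce Theorem~\ref{th:rmhc} directly from the hypergraph Regularity plus Counting Lemma machinery (Nagle--R\"odl--Schacht, Gowers, Tao, as already cited in the introduction), applied in a form that simultaneously regularises every color class of~$K$. Rather than treating the colors as a separate combinatorial object, I will view the colored hypergraph as a bounded-length tuple of uncolored hypergraphs on a common vertex set, regularise them jointly, and count color-preserving copies of $H$ using the resulting pseudo-random structure.

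Concretely, let $r$ denote the number of colors appearing on edges of $H$; any edge of $K$ with a color outside this palette can be discarded since it never participates in a color-preserving copy of $H$. Regard $K$ as an ordered tuple $(K_1,\ldots,K_r)$ of uncolored $(k+1)$-uniform hypergraphs on the common vertex set $V$, where $K_i$ collects the edges of color $c_i$; decompose $H$ analogously as $(H_1,\ldots,H_r)$. A color-preserving copy of $H$ in $K$ is then exactly an injection $\varphi\colon V(H)\to V$ that embeds $H_i$ into $K_i$ for every $i$ simultaneously.

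Next, apply the strong form of the hypergraph regularity lemma to the tuple $(K_1,\ldots,K_r)$, producing a partition of $V$ together with partitions of the $j$-element subsets of $V$ for $1\leq j\leq k$ such that each $K_i$ is pseudo-random with respect to this \emph{common} refinement. The associated Counting Lemma then implies that, up to an additive error $o(M^m)$, the number of color-preserving copies of $H$ in $K$ equals a sum over all consistent assignments of polyads to the edges of $H$ of the expected count predicted by the product of the color-densities on the relevant polyads, scaled by $M^m$. If this total count is in fact $o(M^m)$, then almost every polyad-assignment used by a hypothetical copy of $H$ must involve at least one edge whose density in its own color on the underlying polyad lies below a threshold $\eta=\eta(\varepsilon,k,m)$. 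Deleting from $K$ every edge that lies in an irregular polyad, in a polyad of density below~$\eta$ in its own color, or in a polyad never touched by any color-preserving copy of $H$, removes only $o(M^{k+1})$ edges while destroying every color-preserving copy of~$H$.

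The main obstacle is organising the hypergraph regularity step so that a \emph{single} common sequence of lower-uniformity partitions witnesses pseudo-randomness for \emph{all} $r$ color classes at once. In the monochromatic case one iteratively refines the partitions until a single hypergraph becomes pseudo-random on every polyad; in the colored case the same refinement argument must be executed in parallel for the $r$ hypergraphs $K_i$, each refinement step chosen to simultaneously decrease the mean-square-density deficit of every color. This is feasible because $r$ is a constant depending only on $H$, so the iteration still terminates in a bounded number of steps, and the Łoś--Tarski-style tower bounds merely multiply. Once this common refinement is in hand, the Counting Lemma and the ensuing edge-deletion argument carry over from the uncolored situation almost verbatim and yield the required $o(M^{k+1})$ bound.
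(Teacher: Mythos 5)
The paper does not prove Theorem~\ref{th:rmhc} at all: it is imported as a black box, quoted from a more general result of Austin and Tao~\cite[Theorem 2.1]{at}. So what you are offering is a from-scratch sketch of that known result via the regularity--counting--cleaning method. The family of ideas is the right one (view the colored hypergraph as a bounded tuple of uncolored hypergraphs, regularise them over a common partition system, count, clean), but as written your argument has a concrete flaw in the cleaning step and leaves the genuinely hard part asserted rather than proved.

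The concrete flaw: your deletion rule includes removing ``every edge that lies \dots in a polyad never touched by any color-preserving copy of $H$''. If $K$ contains no copy of $H$ at all (perfectly consistent with the hypothesis of $o(M^m)$ copies), this rule deletes \emph{every} edge of $K$, i.e.\ possibly $\Theta(M^{k+1})$ edges, so the set $E'$ you construct need not satisfy $|E'|=o(M^{k+1})$. The correct cleaning deletes only edges lying in irregular polyads or in polyads whose density in that edge's own color is below $\eta$ (plus the usual negligible exceptional sets), and the fact that this destroys \emph{all} copies is then proved by contradiction: a surviving color-preserving copy would have all its edges in regular, $\eta$-dense (in the appropriate colors) polyads, and the counting lemma would then produce at least $c(\eta,T_0)\,M^m$ copies, contradicting the hypothesis once $\delta<c$. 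Your ``almost every polyad-assignment'' formulation is exactly what this contradiction is needed to replace; ``almost every'' cannot guarantee that every copy is destroyed within the $o(M^{k+1})$ budget. Beyond this, the two statements carrying all the weight --- a hypergraph regularity lemma that simultaneously regularises the $r$ color classes over one common hierarchy of partitions, and a counting lemma valid for such a tuple --- are only asserted. In the hypergraph setting these are precisely the deep content of Nagle--R\"odl--Schacht, Gowers and Tao; ``run the energy increment in parallel, the tower bounds merely multiply'' is a plausible programme, not a proof (and the bounds in question are Ackermann-type, not \L{}o\'{s}--Tarski). Either these steps must be carried out in detail, or one should do what the paper does and simply cite Austin--Tao (or an explicitly multicolored removal lemma from the literature) for Theorem~\ref{th:rmhc}.
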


\section{Main result and its proof} \label{sec:proof}

In this section, we present the proof of Theorem~\ref{th:rm-syseq}
except for an auxiliary lemma (Lemma~\ref{lem:mat-c-rk}) whose
proof is given in the next section.
We first explain the main steps of the proof.

\subsection{Outline of the proof} \label{sec:outline}

We associate to the system $Ax=b$, where $A$ has size $k\times m$,
an edge-colored $(k+1)$-uniform hypergraph $H$ with $m$ edges and
$m$ vertices. We shall construct a large $m$-partite $(k+1)$-uniform
hypergraph $K$ on $mq$ vertices built up with $m$ copies, $F_1, F_2,
\ldots,F_m$, of the field $F=\F_q$. The edges of $K$ are defined in
such a way that each solution of the system corresponds to $q^k$
edge--disjoint copies of $H$ with each edge representing an element
of $X_i$. The bound on the number of solutions of our linear system
translates to the fact that $K$ contains $o(q^{m})$ copies of $H$.
By the Removal Lemma for hypergraphs, Theorem~\ref{th:rmhc}, we will
find a set $E'$ of edges with size $o(q^{k+1})$, such that, by
removing $E'$ from $K$ we delete all copies of $H$. We then apply a
pigeonhole argument to reduce the $o(q^{k+1})$ edges to $o(q)$
elements from each set $X_i$ using the fact that the $q^{k}$ copies
corresponding to the same solution are edge--disjoint.

The key point above is the construction of the auxiliary hypergraph.
Before we explain the details of this construction,
we show that we can assume the matrix $A$ to be of a certain
special form.

\subsection{Reductions of the system}\label{sec:reduction}

First observe that, by the nature of the statement of Theorem~\ref{th:rm-syseq},
there is no loss of generality in assuming that the matrix $A$
has full rank $k$. By permuting the columns and an
appropriate choice of a basis of $F^m$, the matrix $A$ can be
assumed to be of the form $A=(I_k | B)$ where $I_k$ is the identity
matrix.

Our second observation is that it suffices to prove Theorem
\ref{th:rm-syseq} for homogenous systems. Indeed, if $A$ is written
in the form $(I_k|B)$, the general statement follows by applying it
to the system $Ax=0$ once we replace the first $k$ sets
$X_1,\ldots ,X_k$ by $X'_1=X_1-b_1,\cdots , X'_k=X_k-b_k$, where
$b=(b_1,\ldots ,b_m)$.

We will make a further assumption on $A=(I_k|B)$: any two rows in
$B$ have rank $2$. Suppose on the contrary that rows $B_i$ and $B_j$
are not linearly independent, say $B_i=\lambda B_j$. This implies
that every solution of the system $Ax=0$ satisfies $x_i=\lambda^{-1}
x_j$. Therefore we can replace $X_i$ by $X'_i=X_i\cap
\left(\lambda^{-1}\cdot X_j \right)$, delete the $j$-th equation
together with the $j$-th variable and apply our theorem
in the resulting setting: the obtained system contains
one less equation and one less variable.

We may also assume that any row $B_i$ has, at least, two non-zero
entries. Otherwise the $i$'th equation would read $x_i+b_{i,j}x_j=0$
for some $j\in [k+1,\ldots,m]$. As in the preceding paragraph, we
can replace the set $X_j$ by $X'_j=X_j\cap \left(-b_{i,j}^{-1}\cdot
X_i\right)$ and consider the system obtained
by eliminating the $i$-th equation and the $i$-th variable.

Consequently, we can assume that $A$ and $b$ satisfy the following:

\begin{itemize}

\item[{\rm (i)}] $b=0$.

\item[{\rm (ii)}] The matrix $A$ has the form $A=(I_k|B)$ where $I_k$ is the
identity matrix.

\item[{\rm (iii)}] Every two rows of $B$ are linearly independent.

\item[{\rm (iv)}] Each row of $B$ has at least two non-zero entries.

\end{itemize}

Notice that the condition (iv) implies that $m\geq k+2$.

\subsection{Construction of the hypergraph $K$}
\label{sec:build-hyper}

For the construction of the hypergraph $K$ with the properties
given in Subsection~\ref{sec:outline}, we shall use an auxiliary
matrix associated to the matrix $A$ which is described in
Lemma~\ref{lem:mat-c-rk}. Before we state the lemma, let us
introduce some additional notation. If $M$ is a matrix, the $i$-th
row of a matrix $M$ is denoted by $M_i$ and $M^j$ denotes its $j$-th
column. The support of a vector $x\in F^n$, denoted by $s(x)$, is
the set of coordinates with a nonzero entry.

\begin{lemma} \label{lem:mat-c-rk}
Let $A=(I_k|B)$ be a $(m\times k)$-matrix with coefficients in
$\F_q$. There are a $(m\times m)$ matrix $C$ and $m$ pairwise
distinct $(k+1)$-subsets
 $S_1,\ldots ,S_m\subset [1,m]$ with the following
properties:
\begin{enumerate}
\item $AC=0$ \label{c:sys-full}
\item $rank(C) = m-k$ (maximal subject to the first condition).
\label{c:tot-rank}
\item For every $i$, $s (C_i)\subset S_i$ and $s(C_i)\neq \emptyset$.
\label{c:row-sup}
\item For every $i$,
      there exists a subset $S_i'\subset S_i$ with $|S_i'|=k$ such that
      the set of columns $\{C^j,\,\; j\in [1,\ldots,m]\setminus S_i'\}$
      has rank $m-k$. \label{c:part-rank}
\end{enumerate}
\end{lemma}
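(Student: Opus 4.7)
My plan is to construct $C$ and the sets $S_1,\ldots,S_m$ directly, exploiting the special form $A=(I_k|B)$ given by reductions (i)--(iv). Since $AC=0$ forces each column of $C$ to lie in $\ker A$, and every element of $\ker A$ is determined by its last $m-k$ coordinates $\mathbf v\in F^{m-k}$ (the first $k$ being $-B\mathbf v$), I parameterize $C$ by an $(m-k)\times m$ matrix $M$, taking the bottom $m-k$ rows of $C$ to be $M$ and the top $k$ rows to be $-BM$. This makes condition~\ref{c:sys-full} automatic and reduces condition~\ref{c:tot-rank} to $\mathrm{rank}(M)=m-k$. The rows of $C$ are then $M_j$ for $j\in[k+1,m]$ and $-B_iM$ for $i\in[1,k]$; a column-by-column translation shows that requiring $s(C_i)\subset S_i$ for every $i$ is equivalent to each column $M^l\in F^{m-k}$ lying in a subspace $W_l$ cut out by the vanishing of coordinates $j\in[k+1,m]$ with $l\notin S_j$, together with orthogonality to $B_i$ for $i\in[1,k]$ with $l\notin S_i$.

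Second, the sets $S_i$ are constrained combinatorially. A dimension count shows that $W_l\neq\{0\}$ forces $|\{i:l\in S_i\}|\geq k+1$; since $\sum_l|\{i:l\in S_i\}|=\sum_i|S_i|=m(k+1)$, every $l\in[1,m]$ must lie in \emph{exactly} $k+1$ of the sets. So I would choose the $S_i$'s to be $m$ pairwise distinct $(k+1)$-subsets of $[1,m]$ in which each element of $[1,m]$ has degree exactly $k+1$ in the incidence bipartite graph (for instance, the cyclic intervals $\{i,i+1,\ldots,i+k\}$ modulo $m$). Conditions (iii) and (iv) on $B$ are exactly what allow such a design to be chosen compatibly with $A$: (iv) guarantees that each hyperplane $B_i^\perp\subset F^{m-k}$ is not a coordinate hyperplane, and (iii) guarantees that different rows of $B$ give different hyperplanes, so the $m$ hyperplanes entering the definitions of the $W_l$ are pairwise distinct and can be made to behave generically.

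Third, with the $S_i$'s fixed, for each $l$ I pick a nonzero $M^l\in W_l$; this gives condition~\ref{c:row-sup} directly. The collective $M^l$'s span $F^{m-k}$, so $\mathrm{rank}(M)=m-k$ and condition~\ref{c:tot-rank} holds, by virtue of the distinctness and general position of the defining hyperplanes. Condition~\ref{c:part-rank} is equivalent to the statement that, for each $i$, the $m-k-1$ columns $\{C^l:l\notin S_i\}$ are linearly independent and some column indexed by an element of $S_i$ lifts this rank to $m-k$; the latter is automatic from $\mathrm{rank}(C)=m-k$ (else all $m$ columns would lie in a common $(m-k-1)$-dimensional subspace), and the former follows from general-position arguments analogous to the global rank computation.

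The main obstacle is carrying out these rank verifications simultaneously for a well-chosen design. A clean route is to handle the base case $m=k+2$ by hand---there $[1,m]$ has only $m$ distinct $(k+1)$-subsets, so $\{S_i\}=\{[1,m]\setminus\{i\}\}_{i\in[1,m]}$ is forced, and the (essentially unique) kernel vectors supported on each such set can be written down explicitly using condition~(iv)---and then to induct on $m$, at each step removing a suitably chosen column of $A$ so that conditions (iii) and~(iv) persist for the smaller problem. Without (iii) or (iv), accidental degeneracies between the hyperplanes $B_i^\perp$ and the coordinate hyperplanes can cause the subspaces $W_l$ to collapse or the $M^l$'s to span only a proper subspace of $F^{m-k}$, which is why both conditions are essential.
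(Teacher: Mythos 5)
Your reframing of the problem—parameterizing $C$ by an $(m-k)\times m$ matrix $M$ via $C=\binom{-BM}{M}$, so that condition~\ref{c:sys-full} is automatic and condition~\ref{c:row-sup} becomes a per-column constraint $M^l\in W_l$—is a clean reduction, and the double-counting argument that each index should appear in exactly $k+1$ of the $S_i$'s is a nice observation. But the proposal has a genuine gap precisely at the point you flag as "the main obstacle": the choice of the design $\{S_i\}$ cannot be made independently of $B$, and a fixed design (such as the cyclic intervals) does not always work.

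Here is a concrete counterexample to the cyclic-interval choice. Take $k=1$, $m=4$, $B=(b_1,b_2,0)$ with $b_1,b_2\neq 0$ (conditions (iii) and (iv) hold: there is only one row, with two nonzero entries). With $S_i=\{i,i+1\}\pmod 4$, the subspaces of $F^3$ you describe come out to $W_1=\{v:v_1=v_2=0\}$, $W_2=\{v:v_2=v_3=0\}$, $W_3=\{v:v_3=0,\;B\cdot v=0\}$, and $W_4=\{v:v_1=0,\;B\cdot v=0\}=\{v:v_1=v_2=0\}$. So $W_1=W_4$, forcing $C^1\parallel C^4$; then for $i=2$, \emph{both} choices of $S_2'\subset S_2=\{2,3\}$ leave behind the parallel pair $\{C^1,C^4\}$, and the rank of $\{C^j:j\notin S_2'\}$ is at most $m-k-1$, so condition~\ref{c:part-rank} fails. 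The pairwise distinctness of the $m$ defining hyperplanes (which (iii) and (iv) do give you) is strictly weaker than the general-position property your rank argument needs: an intersection of some hyperplanes with a coordinate hyperplane can coincide with an intersection of other coordinate hyperplanes, and nothing in (iii), (iv) prevents this.

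This means the $S_i$ have to be constructed \emph{adaptively} from $A$, not taken from a fixed combinatorial design, and your appeal to "general-position arguments analogous to the global rank computation" is exactly the step that is missing. (The proposed induction on $m$ is also left unspecified in a way that matters: when you drop a column, the kernel dimension and the number of required sets both change, and you would need to explain how a solution for $m-1$ lifts to one for $m$.) The paper's proof circumvents all of this by running a Gaussian-elimination-style exchange process that builds a cyclic sequence of column bases $\mathcal B_k,\ldots,\mathcal B_m,\mathcal B_1,\ldots,\mathcal B_{k}$ of the column space of $A$; the $j$-th column of $C$ records the coordinates of $A^j$ in the basis $\mathcal B_j$ with a $-1$ on the diagonal, and the sets $S_i$ are then read off from the staircase-shaped region of possible nonzero entries. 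Because the bases are chosen greedily from $A$ itself, the rank and support conditions are verified by elementary combinatorial lemmas about the exchange function $g$ (injectivity, and the fact that $i\notin T_{i-1}$ and $i\notin T_{i-2}$, the latter using (iii) and (iv)) rather than by any genericity claim. You would need an analogous $B$-dependent mechanism for selecting the $S_i$'s to make your route go through; also note you would still need to argue $s(C_i)\neq\emptyset$, which the paper gets for free from $C_{ii}=-1$.
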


The proof of Lemma~\ref{lem:mat-c-rk}
is postponed to Section~\ref{sec:c-build}.

We are ready to define a suitable hypergraph representation of
the linear system along the lines described in Subsection~\ref{sec:outline}.
Let $Ax=0$ be a linear system,
where $A$ is a $k\times m$ matrix with entries in $\F_q$ satisfying
the properties (i)--(iv) at the end of Subsection~\ref{sec:reduction}.
Let $C$ be the matrix associated to $A$  and
$S_1, \ldots, S_m$ be the $(k+1)$--subsets of $[1,m]$ satisfying
the properties stated in Lemma~\ref{lem:mat-c-rk}.

First, the hypergraph $H$ is the $(k+1)$-uniform edge-colored
hypergraph with vertex set $\{ 1,2,\ldots ,m\}$ and with edges
$S_1,S_2,\ldots, S_m$, where the edge $S_i$ is colored $i$.

The hypergraph $K$ is the $(k+1)$-uniform $m$-partite hypergraph
with the vertex set $F\times [1,m]$. For every $x\in X_i$, $K$
contains an edge $\{(a_j,j), j\in S_i\}$ if and only if
$$
\sum_{j\in S_i} C_{ij}a_j=x,
$$
and this edge is colored by $i$ and labelled by $x$. Since
the support $s(C_i)$ is nonempty, $K$ contains
precisely $q^k$ edges colored by $i$ and labelled by $x$
for each $x\in X_i$ and every color class of $K$ form a simple
hypergraph.

We now show that the hypergraphs $K$ and $H$ have the properties
given in the outline of the proof.

\begin{claim}\label{claim2} If $H'$ is a copy of $H$ in $K$,
then $x=(x_1,\ldots ,x_m)$ is a solution of the system, where $x_i$
is the label of the edge colored by $i$ in $H'$.
\end{claim}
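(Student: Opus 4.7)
The plan is to unpack the construction of $K$, translate the label-on-each-edge condition into a single matrix identity $x=Ca$ for some $a\in F^m$, and then invoke property~(1) of Lemma~\ref{lem:mat-c-rk}, namely $AC=0$. The entire point of introducing the matrix $C$ and the subsets $S_i$ was to rig $H$ and $K$ so that this becomes a one-line calculation.

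First I would fix a color-preserving copy $H'$ of $H$ in $K$. Because $K$ is $m$-partite and its edges of color $i$ use precisely the parts indexed by $S_i$, such a copy is forced to be partite-respecting: vertex $j\in V(H)$ maps to some $(a_j,j)\in F\times\{j\}$. The copy $H'$ is therefore encoded by an $m$-tuple $a=(a_1,\ldots,a_m)\in F^m$, and its edge colored $i$ is exactly $\{(a_j,j):j\in S_i\}$.

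Next, by the defining rule for edges of $K$, the label $x_i\in X_i$ of this edge satisfies
\[
x_i \;=\; \sum_{j\in S_i} C_{ij}\,a_j .
\]
Property~(3) of Lemma~\ref{lem:mat-c-rk} gives $s(C_i)\subset S_i$, so $C_{ij}=0$ for $j\notin S_i$, and the sum equals $(Ca)_i$. Running $i$ over $[1,m]$ yields the vector identity $x=Ca$. Applying property~(1) of Lemma~\ref{lem:mat-c-rk}, we then get
\[
Ax \;=\; A(Ca) \;=\; (AC)a \;=\; 0,
\]
which, after the reduction to the homogeneous case carried out in Subsection~\ref{sec:reduction}, is exactly the assertion of the claim.

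There is no real obstacle here; everything is a direct unpacking once the correspondence between copies of $H$ and tuples $a\in F^m$ is established. The mildly non-trivial point is that observation: it rests on the fact that each color class of $K$ contains only edges whose vertex set lies in the prescribed parts $F\times\{j\}$, $j\in S_i$, so a color-preserving copy has no freedom to mix up the partition.
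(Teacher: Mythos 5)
Your proposal is correct and follows essentially the same route as the paper's own proof: read the copy $H'$ as a vector $a\in F^m$ (one coordinate per part of $K$), use the edge-defining rule together with $s(C_i)\subset S_i$ to obtain $x=Ca$, and conclude $Ax=(AC)a=0$. You spell out the partite-respecting step a bit more explicitly, but the key identity and its use are identical to the paper's argument.
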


\begin{proof}
Let $\{ (a_1,1), (a_2,2), \ldots,(a_m,m)\}$ be the vertex set of
$H'$. By the construction of $K$, it holds that $Ca=x$ where
$a=(a_1, a_2, \ldots ,a_m)$. Hence, $0=ACa=Ax$ and $x$ is a solution
of the system.
\end{proof}

\begin{claim}\label{claim1}
For any solution $x=(x_1,\ldots,x_m)$ of the system $Ax=0$ with
$x_i\in X_i$, there are precisely $q^k$ edge--disjoint copies of the
edge--colored hypergraph $H$ in the hypergraph $K$.
\end{claim}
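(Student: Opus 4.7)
The plan is to encode every relevant copy of $H$ inside $K$ by a single vector $a\in F^m$, translate the condition ``the copy realizes the prescribed solution $x$'' into the single matrix equation $Ca=x$, then count such vectors and verify pairwise edge-disjointness. Since $K$ is $m$-partite with parts $F\times\{j\}$ and every edge of color $i$ has vertex set $\{(b_j,j):j\in S_i\}$ for some $b_j\in F$, a copy of $H$ respecting the partition is determined by $a=(a_1,\ldots,a_m)\in F^m$ through $j\mapsto(a_j,j)$. The requirement that the image of the color-$i$ edge of $H$ be present in $K$ and labeled $x_i$ is, by the definition of $K$, exactly $\sum_{j\in S_i}C_{ij}a_j=x_i$; imposing this for every $i$ gives $Ca=x$.

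Next I would count the solutions to $Ca=x$. Because $AC=0$, one has $\mathrm{Im}(C)\subseteq \ker(A)$, and both subspaces have dimension $m-k$ (the first by Lemma~\ref{lem:mat-c-rk}(2), the second because $\mathrm{rank}(A)=k$), so $\mathrm{Im}(C)=\ker(A)$. The hypothesis $Ax=0$ then places $x$ in $\ker(A)=\mathrm{Im}(C)$, so $Ca=x$ is consistent and its solution set is a coset of $\ker(C)$ of size $q^{m-(m-k)}=q^k$. This yields $q^k$ candidate copies of $H$, all corresponding to the given solution $x$.

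The final and principal step is to show these $q^k$ copies are pairwise edge-disjoint, and it is here that property~(4) of Lemma~\ref{lem:mat-c-rk} is indispensable. Take two distinct solutions $a\neq a'$ with $Ca=Ca'=x$ and set $d=a-a'\in \ker(C)\setminus\{0\}$; the two copies share the color-$i$ edge iff $d$ vanishes on every coordinate in $S_i$, so this must be excluded for every $i$. Suppose $d|_{S_i}=0$; then $0=Cd=\sum_{j\notin S_i}d_j C^j$. Lemma~\ref{lem:mat-c-rk}(4) supplies $S_i'\subseteq S_i$ with $|S_i'|=k$ such that the $m-k$ columns $\{C^j:j\notin S_i'\}$ span a space of dimension $m-k$, hence are linearly independent; since $[1,m]\setminus S_i\subseteq [1,m]\setminus S_i'$, the subset $\{C^j:j\notin S_i\}$ is also independent, forcing $d_j=0$ for all $j\notin S_i$. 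Combined with $d|_{S_i}=0$ this gives $d=0$, a contradiction. Hence all $q^k$ copies are pairwise edge-disjoint. The whole argument pivots on property~(4); the correspondence and the counting are essentially linear-algebra bookkeeping, so the main subtlety is making sure that this last linear-independence ingredient does the precise work needed.
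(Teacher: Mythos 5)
Your proof is correct, and its first two steps (encoding copies of $H$ by vectors $a$ with $Ca=x$, and counting $q^k$ such vectors via $\mathrm{Im}(C)=\ker(A)$) are the same as the paper's. Where you diverge is in the edge-disjointness argument. The paper proves this indirectly: using property~(4) of Lemma~\ref{lem:mat-c-rk} it shows that for every one of the $q^k$ edges of $K$ colored $i$ and labeled $x_i$ there exists a solution $u$ of $Cu=x$ whose copy contains that edge, so the map from copies to their color-$i$ edges is a surjection between two sets of size $q^k$, hence a bijection, hence no two copies share a color-$i$ edge. You instead argue by contradiction directly on the difference vector: if two copies share the color-$i$ edge then $d=a-a'\in\ker C$ is supported off $S_i$, and since $\{C^j:j\notin S_i\}$ is a subset of the linearly independent family $\{C^j:j\notin S_i'\}$ from Lemma~\ref{lem:mat-c-rk}(4), $Cd=0$ forces $d=0$. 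Your route is shorter and avoids the ``solve for $u$ with prescribed coordinates on $S_i'$'' construction, replacing it with a one-line linear-independence observation; both hinge on exactly the same property of the matrix $C$. One small bookkeeping point worth making explicit: when you pass from $\sum_{j\in S_i}C_{ij}a_j=x_i$ to the single equation $Ca=x$, you are implicitly invoking Lemma~\ref{lem:mat-c-rk}(3), i.e.\ that row $C_i$ is supported on $S_i$, so the two sums agree.
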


\begin{proof}
Fix a solution $x=(x_1,\ldots,x_m)$ of $Ax=0$
with $x_i\in X_i$, $1\le i\le m$.
First,
we show that there is a copy of $H$ in $K$
in which the edge colored $i$ has label $x_i$, $1\le i \le m$.

Since the matrix $C$ has rank $m-k$ and satisfies $AC=0$,
the columns in $C$ spans the solution space in $F^m$ and thus
there is a vector $u=(u_1,\ldots,u_m)$ with $x=Cu$. In particular,
$$x_i=(C_i,u)=\sum_{j=1}^m c_{ij}u_j=\sum_{j\in S_i}c_{ij}u_j,$$
where the second equality follows from
Lemma~\ref{lem:mat-c-rk}~(\ref{c:row-sup}). Therefore, for every
$i$, the set $\{(u_j,j), j\in S_i\}$ is an edge of $K$ colored $i$
and labeled $x_i$. It follows that the edges $\{(u_j,j), j\in
S_i\}$, $i=1,\ldots m$, span a copy of $H$ in $K$. Since the kernel
of $C$ is $k$-dimensional, there are $q^k$ vectors $u$ satisfying
$x=Cu$, and each of them corresponds to a copy of $H$ in $K$. We
next verify that these $q^k$ copies are edge--disjoint.

Let $e=\{ (a_j,j), j\in S_i\}$ be an edge of $K$ colored by $i$ and
labeled $x_i\in X_i$. We show that all the $q^k$ copies of $H$ in
$K$ contain different edges colored by $i$ and labelled $x_i$. By
Lemma~\ref{lem:mat-c-rk}~(\ref{c:part-rank}), there is a subset
$S_i'\subset S_i$ of size $k$ such that $\{ C^j, j\not\in S'_i\}$ is
a set of $m-k$ linearly independent solutions of the system $Ax=0$.
Hence, we may find a vector $u=(u_1,\ldots ,u_m)$ with $x=Cu$ such
that $u_j=a_j$ for each $j\in S'_i$. With this choice, we must also
have $u_j=a_j$ for each $j\in S_i$ and the copy of $H$ associated to
this $u$ contains the edge $e$. Hence, for each edge colored $i$ and
labeled $x_i$ there is a copy of $H$ associated to $x$ in $K$ which
contains this edge. Since there are $q^k$ such edges and the same
number of copies of $H$ associated to the solution $x$, no two
copies can share the same edge colored $i$ and labelled $x_i$. By
applying the same argument to each of the colors $1,\ldots,m$, we
conclude that the $q^k$ copies of $H$ associated to the solution $x$
are edge--disjoint.
\end{proof}

We now proceed with the proof of Theorem~\ref{th:rm-syseq}.

\begin{proof}[Proof of Theorem~\ref{th:rm-syseq}]
Since the number of solutions is $o(q^{m-k})$,
by Claims~\ref{claim1} and~\ref{claim2},
$K$ contains  $o(q^m)$ copies of $H$.
By the Removal Lemma for colored hypergraphs (Theorem~\ref{th:rmhc}),
there is a set $E'$ of edges of $K$ with size
$o(q^{k+1})$ such that, by deleting the edges in $E'$ from $K$,
the resulting hypergraph is $H$-free.

The sets $X'_i$ are constructed as follows: if $E'$ contains at
least $q^k/m$ edges colored with $i$ and labelled with $x_i$, remove
$x_i$ from $X_i$. In this way, the total number of elements removed
from all the sets $X_i$ together is at most $m\cdot o(q)=o(q)$.
Hence, $|X_i\setminus X'_i|=o(q)$ as desired. Assume that there is
still a solution $x=\left(x_1,x_2, \ldots,x_m\right)$ with $x_i\in
X'_i$. Consider the $q^k$ edge--disjoint copies of $H$ in $K$
corresponding to $x$. Since each of these $q^k$ copies contains at
least one edge from the set $E'$ and the copies are edge--disjoint,
$E'$ contains at least $q^k/m$ edges with the same color $i$ and the
same label $x_i$ for some $i$. However, such $x_i$ should have been
removed from $X_i$.
\end{proof}

\section{Proof of Lemma~\ref{lem:mat-c-rk}} \label{sec:c-build}

In this section, we give an effective construction of the matrix $C$
with the properties stated in Lemma~\ref{lem:mat-c-rk}.

We first define a sequence $\B_1,\ldots ,\B_m$ of basis of the
column space of $A$ which is formed by columns of $A$.
For a base $\B_i$, $T_i=\{j\in[1,m]: \; A^j\in \B_i\}$
denotes the set of indexes of the columns of $A$ contained in $\B_i$.

We set $T_{k}=[1,k]$, i.e., $\B_{k}=\{ A^1,\ldots ,A^k\}$.
Since no row of the submatrix $B$ is the zero vector, we
may assume, up to reordering the columns from $B$, that
$A_{1,k+1}\neq 0$.

Suppose that $\B_{i}$ has been defined for some $k\le i<m$. Express
each vector $A^{j}$, $i+1\le j\le m$ in the basis $\B_{i}$, i.e.,
$A^{j}=\sum_{r\in T_{i}}b_{r,j}A^r$. Let $g(i+1)$ be the smallest
$r$ such that $b_{r,j}\neq 0$ for some $j\in [i+1,m]$. By permuting
the columns, if necessary, we can assume that $b_{g(i+1),i+1}\neq
0$. The base $\B_{i+1}$ is then obtained from $\B_{i}$ by replacing
the column $A^{g(i+1)}$ with $A^{i+1}$, i.e.,
$T_{i+1}=(T_i\setminus\{g(i+1)\})\cup\{i+1\}$. In particular, if
$i=k$, $T_{k+1}=[2,k+1]$ since $A_{1,k+1}\neq 0$.

We have now defined the bases $\B_k,\ldots,\B_m$. Observe that
$1=g(k+1)<\cdots<g(m)$. Moreover, $T_i\subset [1,i]$ for $k\le i\le
m$. We further set $\B_0=\B_m$ for our convenience.

Suppose that $\B_{i}$ is defined for some $0\le i<k$. We proceed to
define $\B_{i+1}$ in a similar manner. Let $g(i+1)$ be the smallest
index in $T_i\setminus [1,i]$ such that the corresponding
coefficient of the vector $A^i$ expressed in the base $\B_i$
is non-zero. Note that $g(i+1)$ is well-defined since the vectors
$A^1,\ldots,A^i$ are linearly independent. The base $\B_{i+1}$ is
obtained from $\B_{i}$ by replacing $A^{g(i+1)}$ with $A^{i+1}$. In
particular, it always holds $\{1,\ldots ,i\}\subset T_{i}$.
Moreover, the base $\B_k$ defined in this way coincides
with our original choice of it.

An intuitive way of understanding the basis $\B_i$ and sets $T_i$ is
as follows: the set $T_i$ is the lexicographically maximal decreasing
sequence of $k$ indices from $[i-m+1,i]$ (indices are taken modulo $m$)
such that the columns with these indices are linearly independent.
Let us prove this claim formally:

\setcounter{claim}{0}
\begin{claim}\label{claim:maxlex}
The set $T_i$ viewed as a subset of $[i-m+1,i]$ and decreasingly ordered
is the lexicographically maximal subset of $[i-m+1,i]$ such that
the columns $A^j$, $j\in T_i$,
generate the column space of $A$ (indices are taken modulo $m$).
\end{claim}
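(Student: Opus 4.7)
The plan is to identify $T_i$, regarded as a subset of $[i-m+1,i]$ via reduction modulo $m$, with the \emph{greedy basis} of the column matroid of $A$ under the linear order $\prec_i$ on $[1,m]$ defined by $i \succ i-1 \succ \cdots \succ 1 \succ m \succ m-1 \succ \cdots \succ i+1$. A standard matroid fact says that, for any linear order on the ground set, the greedy basis (obtained by scanning elements from largest to smallest and adjoining each that preserves independence) coincides with the lexicographically maximum basis written in decreasing order. Hence once we show that $T_i$ is the greedy basis for $\prec_i$, the claim follows immediately.

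We proceed by induction along the construction order $i = k, k+1, \ldots, m, 0, 1, \ldots, k-1$. The base case $i=k$ is immediate: $A^1,\ldots,A^k$ are the standard basis vectors, so the greedy algorithm applied to $\prec_k$ adjoins $k, k-1, \ldots, 1$ in turn and stops, yielding exactly $T_k=[1,k]$.

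For the inductive step, observe that $\prec_i$ and $\prec_{i+1}$ agree on $[1,m]\setminus\{i+1\}$; they differ only in that $i+1$ moves from the bottom of $\prec_i$ to the top of $\prec_{i+1}$. If $i+1 \in T_i$, promoting $i+1$ to the top does not change the greedy basis, and the construction returns $g(i+1)=i+1$ and $T_{i+1}=T_i$. If $i+1 \notin T_i$, then $T_i \cup \{i+1\}$ contains a unique fundamental circuit $\Gamma$, and a routine matroid-exchange argument shows the new greedy basis is $(T_i\setminus\{g\})\cup\{i+1\}$, where $g$ is the $\prec_{i+1}$-smallest element of $\Gamma\setminus\{i+1\}=\{r\in T_i : b_{r,i+1}\ne 0\}$. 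It remains to check that this $g$ coincides with the construction's $g(i+1)$.

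This identification is the main obstacle and must be handled separately in each of the two phases. In the forward phase $k\le i<m$, the inductive hypothesis gives $T_i\subseteq[1,i]$, and on this range $\prec_{i+1}$ matches the usual integer order, so ``smallest integer in $T_i$ with nonzero coefficient'' coincides with ``$\prec_{i+1}$-smallest'' as required. In the backward phase $0\le i<k$, the hypothesis forces $\{1,\ldots,i\}\subseteq T_i$, hence $T_i=\{1,\ldots,i\}\cup S_i$ with $S_i\subseteq[i+1,m]$. Since $\prec_{i+1}$ ranks every element of $[1,i]$ above every element of $[i+2,m]$, the $\prec_{i+1}$-smallest element of $\Gamma\setminus\{i+1\}$ lies in $\Gamma\cap[i+2,m]$ whenever that intersection is nonempty. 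Because $A^{i+1}=e_{i+1}$ (as $i<k$) is not in $\mathrm{span}(e_1,\ldots,e_i)$, the circuit $\Gamma$ must meet $T_i\setminus[1,i]=S_i$; this both justifies that $g(i+1)$ is well-defined and, in the subcase $i+1\notin T_i$ (so $S_i\subseteq[i+2,m]$), implies that the construction's restriction of $g(i+1)$ to $T_i\setminus[1,i]$ selects exactly the $\prec_{i+1}$-smallest element of $\Gamma\setminus\{i+1\}$, closing the induction.
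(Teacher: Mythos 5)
Your proof is correct but follows a genuinely different route from the paper's. The paper argues by induction on $i$ directly at the level of lexicographic comparison: it takes the hypothetical lex-max set $T'_i$, locates the first index $j$ where $T_i$ and $T'_i$ differ, and derives a contradiction with the (inductive) lex-maximality of $T_{i-1}$ by exhibiting a lexicographically larger independent $k$-subset of $[i-m,i-1]$. You instead reformulate the claim in the language of the column matroid of $A$: you define the cyclic order $\prec_i$, identify $T_i$ with the greedy (hence lex-max) basis under $\prec_i$, and track how the greedy basis changes when the order is updated from $\prec_i$ to $\prec_{i+1}$ by promoting the bottom element to the top. The two are both single-step inductions along the construction, but the mechanisms differ: the paper's is elementary and self-contained, while yours outsources the core combinatorics to two matroid facts --- greedy $=$ lex-max, and the single-element exchange when a non-basis bottom element is promoted to the top. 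Your approach is more conceptual and makes the ``cyclic greedy'' structure of $T_i$ transparent, at the cost that the ``routine matroid-exchange argument'' you invoke (that the new greedy basis is $(T_i\setminus\{g\})\cup\{i+1\}$ with $g$ the $\prec_{i+1}$-smallest element of the fundamental circuit of $i+1$ in $T_i$ minus $i+1$) is really the technical heart of the claim and deserves a proof rather than an assertion; it is true and can be checked via the characterization of the greedy basis by fundamental circuits, but the paper's version carries out the corresponding work explicitly. The remaining bookkeeping in both phases --- the identification of your $g$ with the construction's $g(i+1)$, using $T_i\subseteq[1,i]$ in the forward phase and $\{1,\dots,i\}\subseteq T_i$, $i+1\notin T_i$ in the backward phase --- is handled correctly and mirrors the case analysis the paper does implicitly through its choice of $g$.
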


\begin{proof}
We prove the claim by induction for $i=k,\ldots,m+k-1$.
The claim holds if $i=k$ as $T_k=\{1,\ldots,k\}$ and
this is the lexicographically maximal subset of $[k-m+1,k]$.
Let $T'_{i}=\{t'_1,\ldots,t'_k\}$ be the lexicographically
maximal subset of $[i-m+1,i]$ such that $A^{t'_1},\ldots,A^{t'_k}$
are linearly independent and let $T_{i}=\{t_1,\ldots,t_k\}$.
For simplicity, assume $t_1>t_2>\ldots>t_k$ and
$t'_1>t'_2>\ldots>t'_k$.

Let $j$ be the first index such that $t_j\not=t'_j$. Clearly,
$t'_j>t_j$. Since $t_1=t'_1=i$, it holds $j>1$. Since
$A^{t'_2},\ldots,A^{t'_j}$ are linearly independent,
$\{t'_2,\ldots,t'_{j-1}\}=\{t_2,\ldots,t_{j-1}\}\subseteq
T_{i-1}\setminus \{t_2,\ldots ,t_{j-1}\}$ and $T_{i-1}$ is the
lexicographically maximal $k$-subset of $[i-m,i-1]$ corresponding to
linearly independent columns of $A$, $T_{i-1}$ must contain an
element $r$ that is larger or equal to $t'_j$. Clearly, $g(i)=r$
(otherwise, $r\in T_{i}$).

If $r=t'_j$, then the vectors $A^{t'_1},A^{t'_2},\ldots,A^{t'_j}$
are not linearly independent (as $g(i)=r$ so that $A^r$ is in the
span of $A^{t_1},\ldots ,A^{t_{j-1}}$). If $r>t'_j$, then the the
sets of vectors $A^{t_1}$, $A^{t_2},\ldots,A^{t_{j-1}}$ and
$A^{t_2},\ldots,A^{t_{j-1}}$, $A^{r}$ span the same linear space
which implies that the vectors $A^{t_2},\ldots,A^{t_{j-1}}$,
$A^{r}$, $A^{t'_j}$ are linearly independent. Consequently,
$T_{i-1}=\{t_2,\ldots,t_k\}\cup\{r\}$ is not lexicographically
maximal independent $k$-subset of $[i-m,i-1]$ (recall that
$t'_2=t_2$, \dots, $t'_{j-1}=t_{j-1}$ and $t'_j>t_j$).
\end{proof}

The next claims will be needed to check that the matrix $C$ which we
define has the properties given in Lemma~\ref{lem:mat-c-rk}.

\begin{claim}\label{claim:ginjec}
The function $g:[1,m]\rightarrow [1,m]$ is bijective.
\end{claim}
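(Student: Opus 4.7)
The claim asserts that $g:[1,m]\rightarrow [1,m]$ is a bijection; since domain and codomain are the same finite set, I would prove only injectivity. Two sign bounds, both immediate from the construction, do most of the work. For a forward index $i\in [k+1,m]$, $g(i)\in T_{i-1}\subseteq [1,i-1]$ so $g(i)\leq i-1$, and the construction explicitly records $g(k+1)<g(k+2)<\cdots <g(m)$. For a backward index $i\in [1,k]$, $g(i)$ is chosen from $T_{i-1}\setminus [1,i-1]$, so $g(i)\geq i$. I would also exploit the temporal order in which the bases appear, namely $\B_k\to \B_{k+1}\to \cdots \to \B_m=\B_0\to \B_1\to \cdots \to \B_k$, together with the fact that the column $A^r$ is inserted into the running basis exactly once over the whole sequence, namely at step $r$ (when $i+1=r$).

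Assume for contradiction that $g(i_1)=g(i_2)=r$ for distinct $i_1,i_2$. Since $A^r$ is removed at step $i_1$ and again at step $i_2$, it must be re-added at some step lying temporally strictly between the two removals, and the only such re-addition is step $r$ itself. A three-way case analysis now suffices. If $i_1,i_2$ are both forward, the strict monotonicity $g(k+1)<\cdots <g(m)$ is an immediate contradiction. If both are backward, say $i_1<i_2$, then $r\geq i_2$ by the backward bound, but step $r$ lying temporally between steps $i_1$ and $i_2$ forces $r\leq k$ (otherwise step $r$ is forward and so precedes step $i_1$) and then $r<i_2$, contradicting $r\geq i_2$. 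If $i_1$ is forward and $i_2$ is backward, the two bounds pin $r$ to the interval $[i_2,i_1-1]$, whereas temporal betweenness of step $r$ demands either $r>i_1$ (when $r>k$) or $r<i_2$ (when $r\leq k$), again a contradiction.

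I do not foresee a serious obstacle. The whole argument is a bookkeeping exercise about when each column enters and leaves the basis, and only the wrap-around $\B_m=\B_0$ needs a moment's attention when comparing temporal and linear orders. The degenerate possibility $g(i)=i$ in the backward direction (which happens precisely when $i\in T_{i-1}$, so that step $i$ leaves the basis unchanged) is easily absorbed, since the inequality $g(j)\geq j$ for backward $j$ then rules out $g(j)=i$ for any backward $j\neq i$, so no second preimage of $i$ can occur.
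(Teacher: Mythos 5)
Your core observation---that each index $r$ is inserted into the running set $T$ exactly once, namely at step $r$, so a double deletion $g(i_1)=g(i_2)=r$ is untenable---is exactly the paper's argument, which disposes of the claim in two sentences: over the closed cyclic tour $\B_k\to\B_{k+1}\to\cdots\to\B_m=\B_0\to\cdots\to\B_k$ insertions and deletions must balance, insertions enumerate $[1,m]$ once each, hence so must deletions. The three-way case analysis you build on top of this, using the sign bounds $g(i)\le i-1$ (forward) and $g(i)\ge i$ (backward), is therefore unnecessary work; once you have noted that step $r$ would have to lie strictly on \emph{both} of the two disjoint temporal arcs $(i_1,i_2)$ and $(i_2,i_1)$, the contradiction is already complete, with no appeal to the numerical bounds at all.

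Moreover, your treatment of the degenerate case $g(i)=i$ contains a genuine error. You assert that the inequality $g(j)\ge j$ for backward $j$ ``rules out $g(j)=i$ for any backward $j\neq i$,'' but $g(j)=i\ge j$ is perfectly consistent with the bound whenever $j<i$, so only the backward preimages $j>i$ are excluded; and forward preimages $j\in[k+1,m]$, for which $g(j)\le j-1$ permits $g(j)=i\le k$, are not addressed at all. The case analysis itself, if one insists that the re-addition must fall strictly inside the half-open arc $(i_1,i_2-1]$ (so that the endpoint $r=i_2$ is excluded), does in fact absorb the degenerate possibility, but your separate paragraph purporting to ``easily absorb'' it is simply wrong as written. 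The cleaner route is the paper's: observe that the cyclic sequence of $m$ steps performs $m$ insertions (one of each index) and $m$ deletions and returns to its starting set, forcing the multiset of deletions to equal the multiset of insertions, whence $g$ is a permutation.
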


\begin{proof}
In the described construction, $i$ can only be inserted to $T_j$ if
$j=i$. If $g(r)=g(s)=i$ for a pair of distinct $r$ and $s$ (which
involves deleting $i$ twice), then an element $i$ would be deleted
twice from $T_j$ but inserted only once which is impossible.
\end{proof}

\begin{claim}
\label{claim:inotin} For every $i=1,2,\ldots ,m$, the set $T_{i-1}$
does not contain $i$. Moreover, for every $i=2,\ldots,k+1$ the set
$T_{i-2}$ does not contain $i$.
\end{claim}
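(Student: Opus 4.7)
My plan is to prove both parts via the lex-maximality characterization of $T_j$ provided by Claim \ref{claim:maxlex}, combined with the reductions (iii) and (iv) on the matrix $A$. The forward-phase instances of the first part (namely $i\in[k+1,m]$) are immediate from the inclusion $T_{i-1}\subseteq[1,i-1]$ already noted after the construction: then $i\notin T_{i-1}$ trivially. The remaining instances ($i\in[1,k]$ of the first part and the entire second part) concern $T_j$ for $j\in\{0,1,\dots,k-1\}$, and I plan to handle them in a unified fashion.

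Fix $j\in\{0,1,\dots,k-1\}$. The ordering of $[j-m+1,j]$ underlying Claim \ref{claim:maxlex}, after reducing modulo $m$, is $j>j-1>\cdots>1>m>m-1>\cdots>j+2>j+1$. Since the columns $A^1,\dots,A^j$ are the standard basis vectors $e_1,\dots,e_j$ and correspond to the $j$ largest positions of this ordering, lex-maximality forces $\{1,\dots,j\}\subseteq T_j$. Writing $S=T_j\setminus\{1,\dots,j\}$, one has $|S|=k-j$, $S\subseteq\{j+1,\dots,m\}$, and $S$ is lex-maximal under the restricted ordering $m>\cdots>j+1$ subject to the condition that the projections of $\{A^r:r\in S\}$ onto $\mathrm{span}(e_{j+1},\dots,e_k)$ form a basis of that $(k-j)$-dimensional subspace.

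Now the first part reduces, by lex-maximality, to showing that the basis can be completed without using the smallest element $j+1$. Since $e_{j+2},\dots,e_k$ are already supplied by $A^{j+2},\dots,A^k$, this amounts to finding some $r\in[k+1,m]$ with $B_{j+1,r-k}\neq 0$, which is guaranteed because the row $B_{j+1}$ is nonzero by (iv). For the second part, combine this with lex-maximality: $j+2\notin T_j$ provided the basis can be completed from $\{j+3,\dots,m\}$ without $j+2$. When $j+2\le k$ this calls for two columns of $B$ whose entries in rows $j+1$ and $j+2$ form a rank-$2$ block, i.e.\ for the rows $B_{j+1}$ and $B_{j+2}$ to be linearly independent, which is exactly condition (iii). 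When $j+2=k+1$, i.e.\ $j=k-1$, one instead needs a single $s\in[k+2,m]$ with $B_{k,s-k}\neq 0$, which exists because $B_k$ has at least two nonzero entries by (iv).

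The main obstacle I expect is the boundary case $j+2=k+1$: there the lex-max argument shifts from a rank-$2$ condition on two rows of $B$ to a two-nonzero-entries condition on the single row $B_k$, so (iii) must be replaced by (iv). Once this case is isolated, the rest is a uniform application of Claim \ref{claim:maxlex} with (iii) and (iv), and the two parts of the statement admit essentially the same proof scheme.
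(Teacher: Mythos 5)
Your proof is correct and takes essentially the same approach as the paper: both arguments reduce everything to Claim~\ref{claim:maxlex} and then use assumptions (iii) and (iv) to exhibit a spanning set of columns that avoids the relevant index (or pair of indices), with (iv) handling $i\in[1,k]$ and the boundary case $i=k+1$, and (iii) handling $2\le i\le k$ in the second part. Your unified $j$-parametrization and the explicit "complete the basis $e_1,\dots,e_j$" framing are only expository variants of the paper's "columns $A^j$, $j\notin\{i\}$ (resp.\ $\{i-1,i\}$) span" phrasing.
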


\begin{proof}
The claim  holds by construction if $i\ge k+1$. Hence, we have to
focus on $i\in [1,k]$. Since the columns $A^j$, $j\in
[1,m]\setminus\{i\}$ span the column space of $A$ (by assumption
(iv) on $A$), $i\not\in T_{i-1}$ by Claim~\ref{claim:maxlex}.

To prove the second part of the statement, observe that, by
assumption (iii) on $A$ applied to rows $i-1,i$ with $2\le i\le k$,
the columns $A^j$, $j\in [1,m]\setminus\{i-1,i\}$ span the column
space of $A$. Again by Claim~\ref{claim:maxlex}, we also have
$i\not\in T_{i-2}$.

In the extremal case $i=k+1$ we  also use assumption (iv) on $A$ to
ensure that there is $j>k+1$ such that $A^1,\ldots ,A^{k-1}, A^j$ is
a base. Again by Claim~\ref{claim:maxlex}, $k+1\not\in T_{k-1}$.
\end{proof}

We can now define the matrix $C$. The $j$-th column of $C$ has its
support in $T_{j-1}\cup \{ j\}$. For $i\in T_{j-1}$, the entry $C_{ij}$ is
the coefficient of $A^i$ in the expression of $A^j$ in the base
$\B_j$:
$$
A^j=\sum_{i\in T_{j-1}} C_{ij} A^i,
$$
and $C_{jj}=-1$ (recall that, by Claim \ref{claim:inotin}, we have
$j\not\in T_{j-1}$.)

Clearly, each column of $C$ belongs to the space of solutions of the
system $Ax=0$, so that Lemma~\ref{lem:mat-c-rk}~(\ref{c:sys-full})
holds.

The submatrix of $C$ formed by the last $m-k$ columns and the last
$m-k$ rows is an upper triangular matrix with nonzero entries on the
diagonal which implies that the rank of $C$ is $m-k$.
This proves Lemma~\ref{lem:mat-c-rk}~(\ref{c:tot-rank})

We next define the family $\{ S_1,\ldots ,S_m\}$ of $(k+1)$-subsets
of $[1,m]$. By the definition of the function $g$ and of the matrix
$C$, the nonzero elements in the $j$-th column of $C$
are in the rows $[1,j]\cup [g(j),m]$ if $j\in [1,k]$ and
in the rows $[g(j),j]$ if $j\in [k+1,m]$.
Let $R\subset [1,m]\times [1,m]$ be the set of subscripts
defining this area, i.e,
$(i,j)\in R$ if and only if either $j\in
[1,k]$ and $i\in [1,j]\cup [g(j),m]$ or $j\in [k+1,m]$ and $i\in
[g(j),j]$ (see Figure \ref{fig:R} for a typical portrait of $R$.)

\begin{figure}\label{fig:R}
\setlength{\unitlength}{1mm} \begin{center}
\begin{picture}(40,40)
    \multiput(0,0)(5,0){9}{\line(0,1){40}}
    \multiput(0,0)(0,5){9}{\line(1,0){40}}
    \put(25.3,0){\line(0,1){40}}
    \put(0,15.4){\line(1,0){40}}
    \multiput(0,0)(0,1){30}{\line(1,0){5}}
    \multiput(0,35)(0,1){5}{\line(1,0){5}}
    \multiput(5,30)(0,1){10}{\line(1,0){5}}
    \multiput(10,25)(0,1){15}{\line(1,0){5}}
    \multiput(15,20)(0,1){20}{\line(1,0){5}}
    \multiput(20,15)(0,1){25}{\line(1,0){5}}
    \multiput(25,10)(0,1){30}{\line(1,0){5}}
    \multiput(30,5)(0,1){30}{\line(1,0){5}}
    \multiput(35,0)(0,1){20}{\line(1,0){5}}
    \multiput(5,0)(0,1){25}{\line(1,0){5}}
    \multiput(10,0)(0,1){15}{\line(1,0){5}}
    \multiput(15,0)(0,1){10}{\line(1,0){5}}
    \multiput(20,0)(0,1){5}{\line(1,0){5}}
\end{picture}
\end{center}
\caption{An example of the area $R$ in matrix $C$ which corresponds
to the permutation $g(1,2,3,4,5,6,7,8)=(3, 4,6,7,8,1,2,5)$.}
\end{figure}
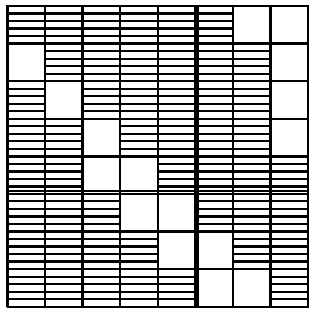

When reading off this area in the matrix by rows we get the subsets
$S_i$, namely,
$$S_i=\left\{
        \begin{array}{ll}
          g^{-1}([1,i])\cup [i,k], & i\in [1,k] \\
          g^{-1}(T_{i-1})\cup \{ i\}, & i\in [k+1,m].
        \end{array}
      \right.
$$
By the definition of $g$, the support of the row $C_i$ is contained
in $S_i$ for every $i\in [1,m]$ and none of the rows is zero (the
entry in the main diagonal is $-1$).

Let us observe that $|S_{i}|=k+1$. It follows from the definition of
$g$ that $g(i)\not\in T_{i-1}$ and, for $i\in [1,k]$,  that the sets
$g^{-1}([1,i])$ and $[i,k]$ are disjoint. Since $g$ is a bijection
(Claim \ref{claim:ginjec}) the sets $S_i$ have cardinality $k+1$ for
each $i\in [1,m]$.

Let us now show that the sets $S_i$ are pairwise distinct. Recall
that the region $R$ contains in column $j\in [1,k]$ the rows
$[1,j]\cup [g(j),m]$. It follows from the second part of
Claim~\ref{claim:inotin} that $j\not\in T_{j-2}$ for
$j=2,...,k+1$, which implies   $g(j-1)>j$.  Hence $S_j$ does not
contain $j-1$ but it does contain $j$. On the other hand,   the
column $j\in [k+1,m]$ contains in  region $R$ the rows $[g(j),j]$,
so again $S_j$ contains $j$ but does not contain $j-1$. Let
$j<j'$. If $j'\le k$ then $\{j'-1,j'\}\subset [j,k]\subseteq S_j$,
which implies $S_j\neq S_{j'}$. If $j'>k$ then, either $j'\not\in
S_j$ or, as $g$ is increasing in $[k+1,m]$, $\{j'-1,j'\}\subset
S_j$, which again implies $S_j\neq S_{j'}$.

In order to prove the last part of  Lemma \ref{lem:mat-c-rk}, we
show that the columns $\{C^j,\; j\not\in S_i\}$ form a set of
$m-k-1$ linearly independent vectors. Together with
Lemma~\ref{lem:mat-c-rk}~(\ref{c:row-sup}) this fact implies
Lemma~\ref{lem:mat-c-rk}~(\ref{c:part-rank}) and completes the proof
of the Lemma.

Let $C'=\{ C^j:\; j\not\in S_i\}$ be the submatrix of $C$ formed by
the columns with indices not in $S_i$. We divide this matrix
into four parts: the upper left $UL=\{ C_{rs}:\; r<i,\; s\in
[1,i]\setminus S_i\}$ formed by the first $i-1$ rows of $C$ and
the columns with index at most $i$,
the upper right $UR=\{ C_{rs}:\; r<i,\; s\in [i+1,m]\setminus S_i\}$
formed by the same rows and the remaining columns,
the lower right $LR=\{ C_{rs}:\; r\ge i,\; s\in
[1,i]\setminus S_i\}$ formed by the last $m-i+1$ rows and the
columns with index at most $i$ and the lower left $LR=\{ C_{rs}:\;
r\ge i,\; s\in [i+1,m]\setminus S_i\}$ with the remaining entries.

By our construction of the matrix $C$, $UR$ is an all-zero matrix,
while, as discussed in the proof of
Lemma~\ref{lem:mat-c-rk}~(\ref{c:tot-rank}),
the columns $C^j$ with $j\in [i+1,m]\setminus S_i$ are linearly
independent. On the other hand, again by the construction of $C$,
$UL$ is an upper triangular matrix. It follows that the columns of
$C'$ are linearly independent. The proof of Lemma~\ref{lem:mat-c-rk}
is now finished.

\end{document}